\newtheorem{theorem}{Theorem}[section]
\newtheorem{proposition}[theorem]{Proposition}
\newtheorem{corollary}[theorem]{Corollary}
\newtheorem{example}[theorem]{Example}
\newtheorem{definition}[theorem]{Definition}
\theoremstyle{remark}
\newtheorem{remark}[theorem]{Remark}
\newcommand{\mC}{\mathcal{C}}
\newcommand{\mB}{\mathcal{B}}
\newcommand{\mD}{\mathcal{D}}
\newcommand{\mF}{\mathcal{F}}
\newcommand{\mH}{\mathcal{H}}
\newcommand{\mG}{\mathcal{G}}
\newcommand{\mI}{\mathbb I}
\newcommand{\mN}{\mathbb{N}}
\newcommand{\Hom}{\text{Hom}}
\newcommand{\Id}{\text{Id}}
\newcommand{\Imf}{\text{Im}}
\newcommand{\og}{\otimes_\mG^{}}
\newcommand{\op}{\otimes^{'}}
\newcommand{\g}{_\mG^{}}
\begin{document}
\title{Induced Monoidal structure from the functor}
\author{Neha Gupta}
\email{neha.gupta@snu.edu.in,pradip.kumar@snu.edu.in,pmishra.math@gmail.com}
\author{Pradip kumar}
\address{Department of Mathematics\\
Shiv Nadar University, Dadri\\ U.P. 201314, India\\}
\subjclass[2010]{18D10,19D23, 55P35}
\keywords{Monoidal structure, Dualizable object, loop space, tensor product}
\begin{abstract}
Let $\mB$ be a subcategory of a given category $\mD$. Let $\mB$ has monoidal structure.  In this article, we discuss when can one extend the monoidal structure of $\mB$ to $\mD$ such that $\mB$ becomes a sub monoidal category of monoidal category $\mD$. Examples are discussed, and in particular, in an example of loop space, we elaborated all results discussed in this article.
\end{abstract}
\maketitle
\section{Introduction}
We start from a few examples of a category $\mB$
\begin{enumerate}
\item  Let $\mB$ be the category of one dimensional closed oriented manifolds without boundary and arrows as the oriented cobordism classes. It has monoidal structure given by disjoint union and empty set, see \cite{kock}. Let $\mD$ be  the category whose objects are the oriented cobordism classes between objects of $\mB$ with arrows defined in a special way (details are discussed in the example \ref{ex:loopspace_image}).
\item Let $\mB$ be the category of finite dimensional vector spaces over $\mathbb R$ with usual tensor product of vector spaces. If $\mD$ be the category having objects as loop spaces and morphisms defined in a special way (details are discussed in the section 5).
\end{enumerate}
For both these setups, we ask a natural question, that from $\mB$, can we get a monoidal structure on the category $\mD$. In this article we discussed about this type of question.  More precisely,  in theorem \ref{thm: mainthm}, we proved that under certain conditions, we can provide a monoidal structure on a category $\mD$ from  a given monoidal structure on some category $\mB$

When $\mB$ is a subcategory, the induced monoidal structure (as in theorem \ref{thm: mainthm}) on $\mD$ , from $\mB$ is the extension of the monoidal structure in the sense that $\mB$ becomes the monoidal subcategory of $\mD$.  This extension is not same as the  graded extension of the monoidal category discussed in \cite{cegarra}. In \cite{ponto}, Ponto Kate and Shulman Michael have discussed the concept of a normal lax symmetric monoidal functor between two monoidal categories; and using such a functor we have got an  induced monoidal structure on the category $\mD$. Our article is based on the discussion in \cite{ponto}.

In section 2, we revise the basics about the monoidal category, dual objects and strong lax symmetric monoidal functor.  Section 3 and 4 discuss the induced tensor with its  various properties and examples.  In the last section 5,  we did explicit calculation for the case of loop space.

\section{Preliminaries}
\subsection{Monoidal category}
We first set the terminology that we use throughout the article which can be found in \cite{maclane},\cite{ponto},\cite{kassel} etc. We begin with a few terms from category theory. A \textit{monoidal}  \textit{category} is a collection of data $(\mC, \otimes, \mI,a, \lambda,\rho)$
where $\mC$ is a category, $\otimes$ is a functor from $\mC \times \mC$ to $\mC$ called the tensor
product, an associator $a$ for $\otimes$ which is a family of isomorphisms
\begin{equation}\label{associater}
a^{}_{X,Y,Z}: (X\otimes Y) \otimes Z\to X\otimes (Y \otimes Z),
\end{equation}
an unit object $\mI$ of $\mC$, and a left unitor $\lambda$ and a right unitor $\rho$ with respect to $\mI$ which are also a family of isomorphisms
\begin{equation}\label{left unitor}\lambda^{}_X:  \mI \otimes X\to X\end{equation}
\begin{equation}\label{right unitor}\rho^{}_X:  X \otimes \mI\to X\end{equation}
such that these isomorphisms $(a,\lambda,\rho)$ satisfy certain coherence conditions (for details we refer \cite{kassel}).  To name, these conditions are the Pentagon identity and the Triangle  identities. Note that the existence of the associator ensures that the tensor product $\otimes$ is associative up to isomorphism and the existence of the unitors ensure that $\mI$ serves as a unit for $\otimes$, up to isomorphism.

Let $X$ and $Y$ be objects in $\mC$. We say $X$ has a \textit{left dual} $Y$ (or $Y$ has a \textit{right dual} $X$) if there
is
\begin{enumerate}
\item[$\cdot$] a morphism $ev : Y \otimes X \to \mI$ (called evaluation), and
\item[$\cdot$]  a morphism $coev : \mI \to X\otimes Y$ (called coevaluation)
\end{enumerate}
such that the compositions
\[
\begin{CD}
X@>{\lambda^{-1}_X}>>
 \mI\otimes X @>coev\otimes \Id_X>> (X\otimes Y)\otimes X@>a^{}_{X,Y,X}>>  X\otimes (Y\otimes X)@>\Id_X\otimes ev>>  X\otimes \mI@>\rho^{}_X>>  X
\end{CD}
\]
\text{ and\\}
\[
\begin{CD}
Y@>{\rho^{-1}_Y}>>
 \mI\otimes Y @>\Id_Y\otimes coev>> Y\otimes (X\otimes Y)@>a^{-1}_{Y,X,Y}>>  (Y\otimes X)\otimes Y@>ev\otimes \Id_Y>>  \mI\otimes Y@>\lambda^{}_Y>>  Y
\end{CD}
\]

are both identity. If $X$ has a left [right] dual, we say that $X$ is left [right] dualizable, If the left and right duals are isomorphic in $\mC$, we say $X$ is \textit{dualizable}, and in general the dual of $X$ is denoted as $X^*$.

Let $\mC$ and $\mD$ be symmetric monoidal categories.  There is a special functor which takes dualizable object in $\mC$ to dualizable object in $\mD$.  Below we give the precise definition of such a functor and recall required  properties (for detail we refer to  \cite{ponto}).
\begin{definition}[Lax Symmetric monoidal functor]
Lax symmetric monoidal functor between symmetric monoidal  categories consists of  a functor $\mF: \mC\to \mD$ and natural transformations
$$c: \mF(M)\otimes \mF(N)\to \mF(M\otimes N)$$
$$i: \mI_\mD\to \mF(\mI_\mC)$$
satisfying appropriate coherence axioms.
\end{definition}
$\mF$ is normal if $i$ is an isomorphism and $\mF$ is strong if both $c$ and $i$ are isomorphism.
\begin{proposition}[Prop.6.1, \cite{ponto}]\label{ponto_1}
If $\mF: \mC\to \mD$ be normal lax symmetric monoidal functor, let $M\in Ob(\mC)$ be dualizable with dual $M^*$ and assume that $c: \mF(M)\otimes \mF(M^*)\to \mF(M\otimes M^*)$ is an isomorphism then $\mF(M)$ is dualizable with dual $\mF(M^*)$.
\end{proposition}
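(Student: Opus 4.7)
The natural candidates for the evaluation and coevaluation witnessing $\mF(M^*)$ as a dual of $\mF(M)$ are obtained by transporting the structure maps of $M$ through $\mF$ and correcting by the coherence morphisms $c$ and $i$. Explicitly, I would set
\begin{equation*}
ev' \;=\; \bigl(\mF(M^*)\otimes \mF(M) \xrightarrow{\; c\;} \mF(M^*\otimes M) \xrightarrow{\;\mF(ev)\;} \mF(\mI_\mC) \xrightarrow{\;i^{-1}\;} \mI_\mD\bigr),
\end{equation*}
\begin{equation*}
coev' \;=\; \bigl(\mI_\mD \xrightarrow{\; i\;} \mF(\mI_\mC) \xrightarrow{\;\mF(coev)\;} \mF(M\otimes M^*) \xrightarrow{\; c^{-1}\;} \mF(M)\otimes \mF(M^*)\bigr).
\end{equation*}
Here $i^{-1}$ exists because $\mF$ is normal, and $c^{-1}$ on the pair $(\mF(M),\mF(M^*))$ exists by hypothesis. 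A brief preliminary remark would note that by symmetry (naturality of $c$ with respect to the symmetry isomorphisms on both sides), the component $c\colon \mF(M^*)\otimes \mF(M)\to \mF(M^*\otimes M)$ is also an isomorphism, so all the pieces above are well defined and invertible where needed.

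The core of the argument is then the verification of the two triangle identities for $(ev', coev')$. I would expand the first composite
\begin{equation*}
\mF(M) \to \mI_\mD\otimes \mF(M) \to (\mF(M)\otimes \mF(M^*))\otimes \mF(M) \to \mF(M)\otimes(\mF(M^*)\otimes \mF(M)) \to \mF(M)\otimes \mI_\mD \to \mF(M)
\end{equation*}
after substituting the definitions of $ev'$ and $coev'$, and then rewrite it by sliding each occurrence of $c^{\pm 1}$ and $i^{\pm 1}$ past the surrounding maps using naturality of $c$ (applied to the morphisms $coev$, $ev$ and $\Id_M$) together with the unit and associativity coherence axioms for a lax monoidal functor (namely that $c\circ(i\otimes\Id)$ composed with the unitor of $\mD$ equals $\mF(\lambda^{-1})^{-1}$, and the analogous statement on the right). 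The effect of these manipulations is to move every coherence morphism to the outside, exhibiting the composite as
\begin{equation*}
\mF(M) \xrightarrow{\;\mF(\text{triangle composite in }\mC)\;} \mF(M),
\end{equation*}
where the inner composite is exactly the first triangle identity witnessing dualizability of $M$ in $\mC$, and so is $\Id_M$. Functoriality of $\mF$ then finishes this identity. The second triangle identity for $\mF(M^*)$ is entirely symmetric, using the other-side invertibility of $c$ noted above.

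The main obstacle is purely bookkeeping: matching each step of the expanded composite with the correct coherence axiom for $(\mF,c,i)$ and with the correct instance of naturality. No deep idea is needed beyond Proposition/definitional input already in the excerpt; the work is to organize the diagram chase cleanly so that the string of coherence morphisms cancels and only $\mF$ of the original triangle composite in $\mC$ remains. Once that reduction is made, dualizability of $M$ in $\mC$ gives the result immediately, and $\mF(M^*)$ is therefore the dual of $\mF(M)$ in $\mD$.
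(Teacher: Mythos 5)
The paper does not actually prove this proposition: it is imported verbatim as Proposition 6.1 of the cited Ponto--Shulman article, so there is no in-paper argument to compare against. Your proposal is the standard (and correct) proof of that cited result: transport $ev$ and $coev$ through $\mF$, correct by $c$ and $i$, and reduce each triangle identity to $\mF$ applied to the corresponding triangle identity in $\mC$ via naturality of $c$ and the associativity and unit coherence axioms. Two small points worth making explicit when you write it out: (i) for $ev'$ you only ever use the component $c\colon \mF(M^*)\otimes\mF(M)\to\mF(M^*\otimes M)$ in the forward direction, so its invertibility (which you correctly deduce from the symmetry axiom) is not actually needed; and (ii) normality is genuinely used, since absorbing the leftover components $c^{}_{M,\mI_\mC}$ and $c^{}_{\mI_\mC,M}$ into $\mF(\rho)$ and $\mF(\lambda)$ via the unit axioms requires $i$ (and hence those components) to be invertible --- this is exactly where the ``normal'' hypothesis enters, and it is worth flagging so the diagram chase does not appear to invert an arbitrary component of $c$.
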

\section{Monoidal structure on Image of $\mF$}
Let $\mC$ and $\mD$ be two categories where $\mC$ has a monoidal structure given by $(\otimes, a, \lambda, \rho)$. Let $\mF: \mC\to \mD$ be a functor, then $\Imf(\mF)$ which is the collection of the images of all objects and morphisms in $\mC$ under $\mF$ becomes a subcategory of $\mD$.  With this set up we wish to give a monoidal structure on $\Imf(\mF)$. If $\mF$ satisfies certain conditions, which we discuss below, then the category $\Imf(\mF)$ has monoidal structure induced from $\mC$.

Let the functor $\mF$ satifies following conditions: for any pair of objects $a_1$, $a_2$ and $a_3$, $a_4$ in $\mC$, such that  whenever we have
\begin{equation}
\mF(a_1)= \mF(a_2)\text{ and }
\mF(a_3)= \mF(a_4)
\end{equation}
then $$\mF(a_1\otimes a_3)= \mF(a_2\otimes a_4)$$

We have similar conditions on morphisms also. For any pair $f, f'$ and $g,g'$  of morphisms in $\mC$, such that
\begin{equation}
\mF(f)= \mF(f')\text{ and }
\mF(g)= \mF(g')
\end{equation}
\text{ then, we have} $$\mF(f\otimes g)= \mF(f'\otimes g')$$

We use the notation $\mB$ for $\Imf{(\mF)}$ and we  define the monoidal functor $\otimes_{\mF}$ on $\mB$ where, $$\otimes_{\mF}:\mathcal{\mB}\times \mathcal{\mB}\to \mathcal{\mB}$$ as follows: on objects,
$$\otimes_{\mF}^{}(b_1, b_2):= \mF(a_1\otimes a_2)$$
where $b_1=\mF(a_1)$ and $b_2=\mF(a_2)$. The condition (4) on functor ensures that this is well defined on objects. On morphisms,
$$\otimes_{\mF}(f_1, f_2):= \mF(h_1\otimes h_2)$$
where $f_1=\mF(h_1)$ and $f_2=\mF(h_2)$. Condition (5) will make sure that this is also well defined as a map. Compatibility of $\otimes_\mF^{}$ with composition in $\mB$ will be followed from the compatibility of the tensor $\otimes$ with composition in $\mC$. Again, the compatibility of $\otimes_\mF^{}$ with identity maps of $\mB$ will require only the compatibility of the tensor $\otimes$ with identity maps of $\mC$. Explicitly, we want to show that the equation $\Id_{b_1^{}\otimes b_2}^{}=\Id_{b_1^{}}^{}\otimes_\mF^{} \Id_{b_2^{}}^{}$ is true in $\mB$ where $b_1^{}$ and $b_2^{}$ are objects in $\mB$, such that $b_\mI^{}=\mF(a_1)$ and $b_2^{}=\mF(a_2)$, for some objects $a_1$ and $a_2$ in $\mC$. Then $b_1\otimes_\mF b_2=\mF(a_1^{}\otimes a_2^{} )$ and the easy calculations shown below gives the result.

\begin{eqnarray*}
\Id_{b_1^{}}^{}\otimes_\mF^{} \Id_{b_2^{}}^{} &=& \Id_{\mF(a_1^{}})^{}\otimes_\mF^{} \Id_{\mF(a_2^{})}^{}\\
&=& \mF(\Id_{a_1^{}}^{})\otimes_\mF^{} \mF(\Id_{a_2^{}}^{})\\
&=& \mF\big(\Id_{a_1^{}}^{}\otimes \Id_{a_2^{}}^{}\big)\\
&=& \mF\big(\Id_{a_1^{}\otimes a_2^{} }^{}\big)\\
&=& \Id_{\mF(a_1^{}\otimes a_2^{} )}^{}\\
&=& \Id_{b_1^{}\otimes b_2^{}}^{}.\\
\end{eqnarray*}

This defines a tensor product $\otimes_{\mF}$ on $\mB$. For $\mB$ to have a  monoidal structure we want the left and right unitors as well as the associator in $\mB$. These are nothing but $\mF(a)$, $\mF(\lambda)$, $\mF(\rho)$ where $a$, $\lambda$ and $\rho$ are the respective left unitor, right unitor and associator of $\mC$ and they will satisfy the triangle and pentagon axioms since they are true in $\mC$. Thus we have proved the following.

\begin{theorem}\label{Thm:Tensor_on_ImF}
If $\mF$ is a functor from a monoidal category $(\mC, \otimes)$ to a category  $\mD$ that satisfies the following conditions
\begin{enumerate}
\item  for any objects  $a_1, a_2, a_3, a_4$ in $\mC$ such that
$\mF(a_1)= \mF(a_2)\;\;and\;\; \mF(a_3)= \mF(a_4)$, then we have $\mF(a_1\otimes a_3)= \mF(a_2\otimes a_4)$,
\item
for any morphisms $f, g, f',g'$ in $\mC$ if
$\mF(f)= \mF(f')= h;\;\; \mF(g)= \mF(g')= k\text{ then, we have} $
$\mF(f\otimes g)= \mF(f'\otimes g')$.
\end{enumerate}
Then $\mF$ induces a monoidal structure on $\Imf(\mF)$.
\end{theorem}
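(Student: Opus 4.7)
The plan is to push forward the monoidal structure of $\mC$ along $\mF$ onto $\mB := \Imf(\mF)$. Concretely, for objects $b_1, b_2 \in \mB$ I pick any preimages $a_i \in \mC$ with $\mF(a_i) = b_i$ and set $b_1 \otimes_\mF b_2 := \mF(a_1 \otimes a_2)$; for morphisms $f_i = \mF(h_i)$ in $\mB$, I set $f_1 \otimes_\mF f_2 := \mF(h_1 \otimes h_2)$. I take $\mI_\mB := \mF(\mI_\mC)$, and define the structural isomorphisms as $a^\mB_{b_1,b_2,b_3} := \mF(a^\mC_{a_1,a_2,a_3})$, $\lambda^\mB_b := \mF(\lambda^\mC_a)$, $\rho^\mB_b := \mF(\rho^\mC_a)$, where again $b = \mF(a)$, $b_i = \mF(a_i)$.

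The first step is to check that these assignments are independent of the chosen lifts. Well-definedness of $\otimes_\mF$ on objects is exactly hypothesis (1); on morphisms it is exactly hypothesis (2). For the associator and unitors I must invoke the same hypotheses iteratively, since, e.g., the source and target of $a^\mB_{b_1,b_2,b_3}$ are themselves defined via preimages and I need to confirm that the resulting morphism in $\mD$ depends only on the $b_i$. The key observation is that each use of $\otimes$ in the source or target is itself covered by conditions (1) and (2), so the choices cascade without trouble.

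The second step is to verify that $\otimes_\mF$ is a bifunctor $\mB \times \mB \to \mB$. Compatibility with identities is the short calculation already displayed before the theorem statement and uses only $\mF(\Id_a) = \Id_{\mF(a)}$ together with the fact that $\otimes$ in $\mC$ preserves identities. Compatibility with composition follows by lifting composable pairs of morphisms in $\mB$ to composable pairs in $\mC$, applying the bifunctoriality of $\otimes$ in $\mC$, and then invoking hypothesis (2) to conclude the identity in $\mB$ independently of the lift.

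Finally, the coherence axioms in $\mB$ are obtained simply by applying $\mF$ to the corresponding commutative diagrams in $\mC$: the pentagon and triangle axioms hold in $\mC$, and functors preserve commutative diagrams, so they hold in $\mB$. Naturality of $a^\mB, \lambda^\mB, \rho^\mB$ comes from naturality of their $\mC$-counterparts together with functoriality of $\mF$. The main obstacle is purely bookkeeping: every definition refers to choices of preimages, so I must carefully re-invoke conditions (1) and (2) at each stage and verify that iterated tensors such as $(a_1 \otimes a_2) \otimes a_3$ and $a_1 \otimes (a_2 \otimes a_3)$ are handled uniformly. Once this consistency is in place, the monoidal axioms transfer essentially for free.
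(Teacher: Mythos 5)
Your proposal follows essentially the same route as the paper's own argument: define $b_1\otimes_\mF b_2:=\mF(a_1\otimes a_2)$ and $f_1\otimes_\mF f_2:=\mF(h_1\otimes h_2)$, use hypotheses (1) and (2) for well-definedness, verify bifunctoriality by lifting to $\mC$, and transport the associator, unitors, and coherence diagrams through $\mF$. The one point where you go beyond the paper is in flagging that the structural isomorphisms $\mF(a^\mC_{a_1,a_2,a_3})$, $\mF(\lambda^\mC_a)$, $\mF(\rho^\mC_a)$ must themselves be checked to be independent of the chosen preimages --- the paper passes over this silently, and note that conditions (1) and (2) only guarantee that the sources and targets match, not that $\mF$ identifies the two associator morphisms arising from different lifts, so this is the step that genuinely deserves the extra care you call for.
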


\begin{example}
Let $\mathcal{D}$ be a monoidal  category with tensor $\otimes_D$ and $\mathcal{C}= \mathcal{D}\times \mathcal{D}$, then $\mC$ is also a monoidal category with obvious tensor, defined component-wise.  Define a functor
$\mF: \mC\to \mD$  such that it associates to each object $(X,Y)$ in $\mC$ to $X$ in $\mD$ and to each morphism $(f,g)\in Hom(X_1,Y_1), (X_2, Y_2))$ to $f\in Hom(X_1, Y_1).$

 One can think of the functor $\mF$ as a projection onto the first component. Further, $\mF$ also satisfies both the condition of theorem \ref{Thm:Tensor_on_ImF}.  Therefore we have a monoidal structure $\otimes_\mF$ on $\mD$. Moreover, both monoidal structures $\otimes_D$ and $\otimes_\mF$ are same.
\end{example}
\begin{example}\label{ex:loopspace_image} (See, \cite{kock})
Let $\mathcal{A}$ be the category having 1-dimensional compact oriented manifold without boundary as objects along with the empty set. Morphism between any two objects $X$ and $Y$ is the oriented cobordism class. We write a representative of this class and also the class as $(W,X,Y)$ where $W$ denotes a 2-dimensional oriented cobordism between $X$ and $Y$. Composition of morphism is given by gluing the representatives of the two cobordism classes. Kock in \cite{kock} has discussed that the result of this gluing does not depend on the actual cobordisms chosen; rather only their class. The class of cylinders is the identity for the composition. Monoidal structure on $\mB$ is the disjoint union of the objects and the of the cobordisms. The Tensor between two morphisms $(W_1,X_1,Y_1)$ and $(W_2,X_2,Y_2)$ is a cobordism class $(W_1\sqcup W_2,X_1\sqcup X_2,Y_1\sqcup Y_2 )$. The unit object of $\mB$ is the empty set.

Consider the product category $\mathcal{A}\times \mathcal{A}$ and call it as $\mB$. Use the tensor on $\mathcal{A}$ to give a monoidal structure on $\mB$. It is defined in the obvious way as component-wise. In particular, the unit object here is $(\emptyset,\emptyset)$.

We now define a category $\mD$ with objects as oriented 2-dimensional cobordism classes, which have oriented cobordisms between the objects of $\mathcal{A}$ but without holes only. As before, we write an object of $\mD$ as $(W,X,Y)$. A morphism between two objects $(W_1,X_1,Y_1)$ and $(W_2,X_2,Y_2)$ in $\mD$ is a pair of morphisms in $\mathcal{A}$, one between $X_1$ and $X_2$ and the other between $Y_1$ and $Y_2$. Since a  morphism of $\mD$ is a pair $(W_1, W_2)$ so composition of morphism is the pair wise gluing of the cobordism classes.

We define a functor $\mF:\mB\to\mD$ such that any object $(X,Y)$ in $\mB$ is taken to the unique cobordism class of oriented 2-dimensional cobordisms between $X$ to $Y$ (without holes). $\mF$ is identity on morphisms.

Clearly this functor is fully faithful. That is to say that $\Imf(\mF)$ is whole of $\mD$. Further, this functor satisfies the two conditions of the above theorem \ref{Thm:Tensor_on_ImF}. Hence we get an induced tensor on $\mD$. We discuss this induced functor explicitly later in examples \ref{tensor on compact}.
\end{example}

\begin{corollary}
Let $\mF: \mC\to \Imf(\mF)$ be a functor as in theorem \ref{Thm:Tensor_on_ImF}.  If $(\mC, \otimes)$ is symmetric then the category $(\Imf(\mF), \otimes_\mF)$ is symmetric as well.
\end{corollary}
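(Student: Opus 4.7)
The plan is to transport the symmetric braiding of $\mC$ through $\mF$ to obtain one on $\mB=\Imf(\mF)$. Writing $\sigma_{a_1,a_2}: a_1\otimes a_2\to a_2\otimes a_1$ for the braiding in $\mC$, I would define a candidate braiding in $\mB$ by
\[
\sigma^\mF_{b_1,b_2} := \mF(\sigma_{a_1,a_2}) : b_1\otimes_\mF b_2 \to b_2\otimes_\mF b_1,
\]
whenever $b_i=\mF(a_i)$. Here the source and target are well defined as objects of $\mB$ by condition~(1) of Theorem~\ref{Thm:Tensor_on_ImF}, and I would verify that the morphism itself is independent of the chosen preimages $a_1,a_2$. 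This last point is the same kind of subtlety already tacitly addressed in the proof of Theorem~\ref{Thm:Tensor_on_ImF} for the associator $\mF(a)$ and the unitors $\mF(\lambda),\mF(\rho)$, and once those are accepted as well-defined, the analogous argument applies to $\sigma^\mF$ since $\sigma_{a_1,a_2}$ is structurally determined by $a_1,a_2$ in exactly the same way.

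Next I would check naturality of $\sigma^\mF$. Given morphisms $f_i:b_i\to b_i'$ in $\mB$, pick preimages $h_i$ in $\mC$ with $\mF(h_i)=f_i$ and $\mF(a_i')=b_i'$. Naturality of $\sigma$ in $\mC$ yields
\[
\sigma_{a_1',a_2'}\circ (h_1\otimes h_2) \;=\; (h_2\otimes h_1)\circ \sigma_{a_1,a_2}.
\]
Applying $\mF$ and using the definition of $\otimes_\mF$ on morphisms (via condition~(2)) gives the naturality square for $\sigma^\mF$ in $\mB$.

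For the symmetry axiom $\sigma^\mF_{b_2,b_1}\circ \sigma^\mF_{b_1,b_2}=\Id_{b_1\otimes_\mF b_2}$, I would simply apply $\mF$ to the identity $\sigma_{a_2,a_1}\circ \sigma_{a_1,a_2}=\Id_{a_1\otimes a_2}$ in $\mC$ and use that $\mF$ preserves composition and identities, noting $\Id_{\mF(a_1\otimes a_2)}=\Id_{b_1\otimes_\mF b_2}$. The hexagon axiom (relating $\sigma^\mF$ to the associator $\mF(a)$) is handled identically: the hexagon commutes in $\mC$, and pushing it through $\mF$ yields the corresponding commutative hexagon in $\mB$, because every arrow in the $\mB$-hexagon is by construction the $\mF$-image of the corresponding arrow in the $\mC$-hexagon.

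The main obstacle is the well-definedness step, which is the only place where something non-formal must be said; everything else is a routine transfer of commutative diagrams along $\mF$ using that composition, identities, the tensor $\otimes_\mF$, the associator, and the unitors on $\mB$ are all defined by directly applying $\mF$ to their counterparts in $\mC$.
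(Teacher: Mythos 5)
Your proposal is correct and follows essentially the same route as the paper: both define the symmetry on $\Imf(\mF)$ as the $\mF$-image of the symmetry isomorphism of $\mC$. In fact you are more thorough than the paper's own proof, which only exhibits $\mF(s_{C_1,C_2})$ as an isomorphism with the right source and target and does not spell out naturality, the coherence axioms, or the well-definedness issue (independence of the chosen preimages) that you rightly identify as the one non-formal point.
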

\begin{proof}
We need to give a symmetric structure in $\mD$  and this will be simply be the image of the symmetry of $\mC$  under the functor $\mF$. Explicitly, if $D_1, D_2\in Ob(Im(\mF))$, then there are $C_1, C_2\in Ob(C)$ such that $\mF(C_1)= D_1$ and $\mF(C_2)= D_2$.
Since $\mC$ is symmetric, therefore there is a symmetric map, say $s_{C_1,C_2}: C_1\otimes C_2\to C_2\otimes C_1$ which is an isomorphism.  Consider $\mF(s_{C_1,C_2}^{}): \mF(C_1\otimes C_2)\to \mF(C_2\otimes C_1)$.
As $\mF$ is a functor therefore $\mF(s_{C_1,C_2})$ is an isomorphism between   $D_1\otimes_\mF D_2:= \mF(C_1\otimes C_2)$ and $D_2\otimes_\mF D_1:= \mF(D_1\otimes D_1)$.
\end{proof}
\begin{corollary}
A functor $\mF$ between a symmetric monoidal category $\mC$ to $\Imf(\mF)$ where $\Imf(\mF)$ has the induced monoidal structure from $\mC$ and which satisfies the conditions of theorem (3.1) is a strong lax symmetric monoidal functor.
\end{corollary}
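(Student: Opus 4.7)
The plan is to exhibit the structure maps $c$ and $i$ explicitly and observe that they are forced to be identities by the very definition of $\otimes_\mF$, so that both the naturality and the coherence axioms reduce, under $\mF$, to statements that already hold in $\mC$.

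First I would set $c^{}_{M,N} : \mF(M) \otimes_\mF \mF(N) \to \mF(M\otimes N)$ to be the identity, and $i : \mI_{\Imf(\mF)} \to \mF(\mI_\mC)$ to be the identity as well. This is legitimate because, by construction in Section 3, the tensor on $\Imf(\mF)$ is defined so that $\mF(M)\otimes_\mF \mF(N) := \mF(M\otimes N)$ on objects and $\mF(h_1)\otimes_\mF \mF(h_2):=\mF(h_1\otimes h_2)$ on morphisms; the well-definedness conditions (1) and (2) of Theorem \ref{Thm:Tensor_on_ImF} are precisely what make these equalities compatible with the choice of preimage. The unit object of $\Imf(\mF)$ was chosen as $\mF(\mI_\mC)$ for the same reason.

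Next I would verify naturality of $c$: given morphisms $f = \mF(h)$ and $g = \mF(k)$ in $\Imf(\mF)$, the square expressing naturality of $c^{}_{M,N}$ in $(M,N)$ commutes because both sides equal $\mF(h\otimes k)$, using the definition of $\otimes_\mF$ on morphisms. Then I would check the coherence hexagons/pentagons and the unit triangles. Here the associator in $\Imf(\mF)$ is $\mF(a^{}_{M,N,P})$, the unitors are $\mF(\lambda^{}_M)$ and $\mF(\rho^{}_M)$, and (using the symmetric structure from the previous corollary) the braiding is $\mF(s^{}_{M,N})$. Because $c$ and $i$ are identities, each coherence diagram in $\Imf(\mF)$ is the image under $\mF$ of the corresponding coherence diagram in $\mC$, and hence commutes since $\mF$ is a functor and the original diagram commutes in $\mC$.

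Finally I would note that, since $c$ and $i$ are identity morphisms, they are in particular isomorphisms, so $\mF$ is strong in addition to being lax symmetric monoidal. I do not anticipate any real obstacle: the only thing that could go wrong is well-definedness of $c$ and $i$ as natural transformations in $\Imf(\mF)$, and this is exactly what conditions (1) and (2) of Theorem \ref{Thm:Tensor_on_ImF} guarantee. The step that requires the most care in writing is simply unpacking why each coherence axiom reduces tautologically to the corresponding axiom in $\mC$ after applying $\mF$.
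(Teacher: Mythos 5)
Your proposal takes essentially the same route as the paper: the paper's proof also observes that $\mF(C_1)\otimes_\mF \mF(C_2)=\mF(C_1\otimes C_2)$ and $\mI_{\Imf(\mF)}=\mF(\mI_\mC)$ hold by the very definition of $\otimes_\mF$, so that $c$ and $i$ may be taken to be identities. You simply spell out the naturality and coherence checks that the paper leaves implicit, which is a correct and welcome elaboration rather than a different argument.
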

\begin{proof}
This is direct from the definition of the induced monoidal structure on $\Imf(\mF)$ described in the theorem \ref{Thm:Tensor_on_ImF}, which is, $D_1\otimes_\mF D_2= \mF(C_1\otimes C_2)$.  Thus by definition itself we get $$\mF(C_1)\otimes_\mF \mF(C_2)= \mF(C_1\otimes C_2)$$
And $$\mI_{\Imf(\mF)}= \mF(\mI_\mC).$$
\end{proof}

\begin{corollary}
If $A\in Ob(\mC)$ is dualizable in $(\mC,\otimes)$, then $\mF(A)$ is dualizable in $(\Imf(\mF),\otimes_\mF)$.
\end{corollary}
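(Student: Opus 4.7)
The plan is to construct an explicit dual for $\mF(A)$ inside $\Imf(\mF)$ by pushing the dual data of $A$ through the functor $\mF$. Let $A^*$ be a dual of $A$ in $\mC$ with evaluation $ev: A^*\otimes A\to \mI_\mC$ and coevaluation $coev: \mI_\mC\to A\otimes A^*$. I propose $\mF(A^*)$ as the candidate dual of $\mF(A)$ in $\Imf(\mF)$, with evaluation $\widetilde{ev}:=\mF(ev)$ and coevaluation $\widetilde{coev}:=\mF(coev)$.

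The first step is to check that these maps have the correct source and target. By the very definition of $\otimes_\mF$ established in Theorem \ref{Thm:Tensor_on_ImF}, we have
\[
\mF(A^*)\otimes_\mF \mF(A)=\mF(A^*\otimes A),\qquad \mF(A)\otimes_\mF \mF(A^*)=\mF(A\otimes A^*),
\]
and by the previous corollary the unit of $\Imf(\mF)$ is $\mF(\mI_\mC)$. Hence $\widetilde{ev}$ and $\widetilde{coev}$ land in the required objects.

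Next I would verify the two triangle identities for $(\mF(A),\mF(A^*),\widetilde{ev},\widetilde{coev})$ in $\Imf(\mF)$. This is where I expect to spend the bulk of the bookkeeping: the associator and unitors in $\Imf(\mF)$ are, by the construction in Theorem \ref{Thm:Tensor_on_ImF}, equal to $\mF(a)$, $\mF(\lambda)$, $\mF(\rho)$ of the corresponding structure morphisms in $\mC$. Therefore each morphism appearing in the triangle composite for $\mF(A)$ in $\Imf(\mF)$ is the $\mF$-image of the corresponding morphism in the triangle composite for $A$ in $\mC$. Functoriality of $\mF$ (preservation of composition and of identities $\mF(\Id_A)=\Id_{\mF(A)}$) then converts the entire composite into $\mF$ applied to the analogous composite in $\mC$, which is the identity by assumption. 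Pushing identities through $\mF$ yields identities, so both triangle axioms hold. Finally, since $A$ dualizable in $\mC$ means its left and right duals are isomorphic, applying $\mF$ to this isomorphism shows $\mF(A^*)$ serves as both left and right dual of $\mF(A)$, so $\mF(A)$ is dualizable in the sense of the definition recalled in Section~2.

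The hard part is not conceptual but notational: carefully matching each arrow of the triangle diagram in $\Imf(\mF)$ with the $\mF$-image of the corresponding arrow in $\mC$, and verifying that the tensor of morphisms appearing inside the composite (for instance $coev\otimes \Id_A$ versus $\widetilde{coev}\otimes_\mF \Id_{\mF(A)}$) agrees with $\mF$ applied to the corresponding tensor in $\mC$; this last point uses condition (2) of Theorem \ref{Thm:Tensor_on_ImF} together with $\mF(\Id_A)=\Id_{\mF(A)}$. As an alternative, when $\mC$ is symmetric one could simply invoke Corollary 3.4 together with Proposition \ref{ponto_1}, but the direct argument above has the merit of not requiring symmetry, which is why I favor it.
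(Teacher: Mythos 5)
Your proof is correct, but it takes a genuinely different route from the paper's. The paper disposes of this corollary in one line: by the immediately preceding corollary, $\mF$ is a strong lax symmetric monoidal functor onto $\Imf(\mF)$, and Proposition \ref{ponto_1} (Ponto--Shulman) then says that a normal lax symmetric monoidal functor with $c$ invertible on the relevant pair carries a dualizable object to a dualizable object. You instead unwind that black box: you transport the duality data $(A^*, ev, coev)$ through $\mF$, use the definitional identities $\mF(A)\otimes_\mF\mF(A^*)=\mF(A\otimes A^*)$ and $\mI_{\Imf(\mF)}=\mF(\mI_\mC)$ to see the candidate maps have the right types, and observe that every arrow in the snake composites in $\Imf(\mF)$ is the $\mF$-image of the corresponding arrow in $\mC$ (including the tensored morphisms, via condition (2) of Theorem \ref{Thm:Tensor_on_ImF} and $\mF(\Id_A)=\Id_{\mF(A)}$), so functoriality collapses each composite to $\mF$ of an identity. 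What your version buys is worth noting: the corollary as stated does not assume $\mC$ is symmetric, whereas the paper's appeal to the ``symmetric monoidal functor'' machinery implicitly does; your direct argument needs no symmetry and is self-contained. What the paper's version buys is brevity and reuse of an established general result. One small point to tighten: for dualizability in the paper's sense you need $\mF(A^*)$ to be both a left and a right dual, so you should state explicitly that the second snake identity (the one exhibiting the right dual) is verified by the same transport argument, and that the isomorphism in $\mC$ between the left and right duals of $A$ is carried by $\mF$ to the required isomorphism in $\Imf(\mF)$; you gesture at this but it deserves one explicit sentence.
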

\begin{proof}
Since $\mF$ is a strong lax symmetric monoidal functor, then by proposition \ref{ponto_1}, we have the result directly.
\end{proof}

\section{Producut category and  Base}

\subsection{Product Category over Index set $\Lambda$}

Let $\Lambda$ be an indexing set. Abusing the use of notation, we define an indexing category, denoted by $\Lambda$, with objects as elements of the indexing set $\Lambda$, and morphisms as
\[ \Hom(\lambda_1, \lambda_2)=\begin{cases}
      \{\Id_{\lambda}^{}\} & \text{if } \lambda_1= \lambda_2 \\
      \emptyset & \text{otherwise}
   \end{cases}
\]

Composition is trivial, which will again be the identity map of the source object.
In particular, we will be considering finite sets as well as countably infinite sets (like set of natural numbers) as a category wherever we talk of a product category in the rest of the paper.

Let $\mB$ be a monoidal category with a unit object $\mI$. We define the  \textit{product} $\Pi_{\Lambda}(\mB)$ of a category $\mB$ over an indexing category $\Lambda$ as a collection of functors $\mF$ from $\Lambda$ to $\mB$ with $\mF(\lambda)$ being an object in $\mB$, and $\mF(\Hom(\lambda, \lambda))=\{\Id_{\mF(\lambda)}^{}\}$. The collection is non empty since  $C$ (the constant functor) that maps every object in $\Lambda$ to the identity $\mI$ in $\mB$ and $\mC(\Id_\lambda)$ is the identity map of the unit $\mI$ in $\mB$ for $\lambda\in\Lambda$. The \textit{product category} $\mD'$ of $\mB$ over an indexing set $\Lambda$  is defined as follows:
\begin{enumerate}
\item an object in $\mD'$ is a functor from $\Lambda \to \mB$,
\item a morphism between two objects $\mG,\mF\in Ob(\mD')$ is a natural transformation $\eta: \mF \to \mathcal{G}$ which gives a family of morphisms $\eta_\lambda^{}: \mF(\lambda)\to \mathcal{G}(\lambda)$ in $\mathcal{B}$,
\item composition of morphism is the vertical composition of two natural transformations. That is, if $\eta\in Hom(\mF, \mathcal{G})$ and $\zeta\in Hom(\mathcal{G}, \mathcal{H})$ are two vertically composable natural transformations, then the composition $\zeta\circ \eta$ is a natural transformation between $\mathcal{F}$ and $\mathcal{H}$ given by $\zeta\circ\eta(\lambda)= \zeta(\lambda)\circ\eta(\lambda)$ for every $\lambda\in\Lambda.$
\end{enumerate}

In particular, if the indexing set $\Lambda$ has only two elements, then as expected, the product category $\mD^{'}$ of $\mB$ over $\Lambda$ is isomorphic to $\mB\times\mB$.

\begin{example}
Finite product.  In case the indexing set is a finite set, then the product category $\mD^{'}$ of any category $\mB$ would be finite copies of $\mB$. 
\end{example}

\subsection{Base}
Let $\mD$ be a category. We define a base for $\mD$ as the data $(\mB, *, \mG)$, where $\mB$ is a category, the functor $*$ (we call as the pasting functor) from $\mD^{'}\to\mD$, and $\mG:\mD\to\mD^{'}$, the decomposition functor (here $\mD^{'}$ is the product category of $\mB$ over some indexing set, say $\Lambda$) such that $*\mG\cong\text{Id}_{\mD}$. $\mB$ in base, we call as base category for $\mD$.  This is like asking for a representation of every object of $\mD$ in terms of objects in $\mB$.  Thus for the same category $\mB$ with different indexing sets in the product category, might give different bases for $\mD$.

\begin{example}
Every category is a trivial base of itself. Simply take $\mB= \mD$, $\Lambda=\{1\}$ with $\mG$ and $*$ as identity functors.
\end{example}
\begin{example}\label{tensor on compact}(continuation to example \ref{ex:loopspace_image}). Countable product of the category $\mathcal{A}$ of one dimensional compact oriented manifold without boundary over $\Lambda=\{1,2,\cdots\}$ serves as a base for the category $\mD$ of oriented 2-dimensional cobordism classes, which have oriented cobordisms between the objects of $\mathcal{A}$ but without holes only. As before, we write an object of $\mD$ as $(W,X,Y)$. The pasting functor $*:\mD^{'}\to \mD$ is defined such that $*(\mH)$ is the cobordism between $\mH(1)$ and $\mH(2)$ for any functor $\mH$ in $\mD^{'}$. The decomposition functor $\mG$ picks an object in $\mD$, which is an oriented cobordism class , say $(W,X,Y)$ without holes, and maps it to the functor : $1\mapsto X$, $2\mapsto Y$ and the rest of the elements of the indexing category are mapped to the empty set. The combination of the two functors is such that $*\mG=\Id_{\mD}$.
Note that the pasting functor $*:\mD^{'}\to\mD$ acts in a way that its restriction onto two copies of $\mathcal{A}$ (that is, indexing category with only two objects) becomes the same functor $\mF$ as discussed in example \ref{ex:loopspace_image}.
\end{example}

\begin{proposition}If the base $\mB$ is a monoidal category then so is the product category $\mD'=\Pi_\Lambda\mB$.
\end{proposition}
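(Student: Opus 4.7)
The plan is to transport the monoidal structure of $\mB$ to $\mD'$ pointwise, exploiting the fact that the indexing category $\Lambda$ is discrete (every morphism is an identity). First I would define the tensor bifunctor $\op:\mD'\times\mD'\to\mD'$ on objects by $(\mF\op\mG)(\lambda):=\mF(\lambda)\otimes\mG(\lambda)$ and on the identity morphisms of $\Lambda$ by $\Id_\lambda\mapsto\Id_{\mF(\lambda)\otimes\mG(\lambda)}$; this produces a bona fide object of $\mD'$ because $\Id_{\mF(\lambda)}\otimes\Id_{\mG(\lambda)}=\Id_{\mF(\lambda)\otimes\mG(\lambda)}$ in $\mB$. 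For natural transformations $\eta:\mF_1\to\mF_2$ and $\zeta:\mG_1\to\mG_2$, set $(\eta\op\zeta)_\lambda:=\eta_\lambda\otimes\zeta_\lambda$; naturality with respect to morphisms of $\Lambda$ is automatic, and functoriality of $\op$ (compatibility with vertical composition and with identities in $\mD'$) inherits pointwise from the corresponding properties of $\otimes$ in $\mB$.

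Next I would take the unit object of $\mD'$ to be the constant functor $C_\mI:\Lambda\to\mB$ sending every $\lambda$ to $\mI_\mB$ and every $\Id_\lambda$ to $\Id_{\mI_\mB}$, which lies in $\mD'$ by definition. The associator and unitors are then assembled componentwise from those of $\mB$: for $\mF,\mG,\mH\in Ob(\mD')$ set
\[
(a'_{\mF,\mG,\mH})_\lambda := a_{\mF(\lambda),\mG(\lambda),\mH(\lambda)},\qquad
(\lambda'_\mF)_\lambda:=\lambda_{\mF(\lambda)},\qquad
(\rho'_\mF)_\lambda:=\rho_{\mF(\lambda)}.
\]
Each family is a natural transformation in $\mD'$ (naturality with respect to $\Lambda$ is again automatic), and is an isomorphism because every component is an isomorphism in $\mB$; naturality in the arguments $\mF,\mG,\mH$ reduces at each $\lambda$ to naturality of $a,\lambda,\rho$ in $\mB$.

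Finally, I would check the pentagon and triangle axioms for $(\mD',\op,C_\mI,a',\lambda',\rho')$. A diagram in $\mD'$ commutes precisely when it commutes after evaluating at every $\lambda\in\Lambda$, so both axioms reduce pointwise to the corresponding axioms in $\mB$, which hold by hypothesis. The argument contains no real obstacle: the discreteness of $\Lambda$ collapses every naturality constraint involving morphisms of $\Lambda$ to a tautology, and what remains is the standard observation that a functor category into a monoidal category inherits a pointwise monoidal structure from its target. The only bookkeeping worth being careful about is verifying that $\op$, $a'$, $\lambda'$, $\rho'$ genuinely land in $\mD'$ (i.e., that identities are preserved), and this follows from the elementary computations already illustrated just after equation~(5) in the paper.
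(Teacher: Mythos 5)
Your proposal is correct and follows essentially the same route as the paper's own proof: the tensor is defined pointwise by $(\mF\op\mG)(\lambda)=\mF(\lambda)\otimes_\mB\mG(\lambda)$, morphisms of $\mD'$ are tensored componentwise, the unit is the constant functor at $\mI_\mB$, and the associator, unitors, and coherence axioms are all inherited pointwise from $\mB$. Your write-up is if anything slightly more explicit about naturality and about verifying that the structure maps genuinely land in $\mD'$, but there is no substantive difference in approach.
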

\begin{proof}
For defining a bifunctor (tensor product) $\otimes' : \mD'\times \mD'\to \mD'$ we have the following data: for any two objects (which are functors) $\mF$ and $\mG$ in $\mD^{'}$,
\begin{enumerate}
\item On level of objects: $\mF\otimes' \mG$ is a functor from $\Lambda\to \mathcal{B}$ defined as
\begin{enumerate}
\item On object level $$\mF\otimes'\mathcal{G}(\lambda)= \mathcal{F}(\lambda)\otimes_B\mathcal{G}(\lambda)\quad\text{ for }\lambda\in\Lambda.$$
\item On the level of morphisms:
$$\mathcal{F}\otimes'\mathcal{G}(\Id_\lambda)= \mathcal{F}(\Id_\lambda)\otimes_\mB\mathcal{G}(\Id_\lambda)= \Id_{\mF(\lambda)}\otimes_\mB \Id_{\mathcal{G}(\lambda)}=  \Id_{\mathcal{F}(\lambda)\otimes_B\mathcal{G}(\lambda)}$$
\end{enumerate}
\item On the level of morphism: For $i=1,2$, if $\eta_i^{}$ are two natural transformations from $\mF_i\to \mathcal{G}_i$, then $\eta_1\otimes'\eta_2: \mF_1\otimes'\mF_2\to \mathcal{\mG_1}\otimes'\mathcal{\mG_2}$ is a natural transformation which is given by the collection of morphisms $$\big(\eta_1\otimes'\eta_2\big)_\lambda^{}: \mF_1(\lambda)\otimes_B\mF_2(\lambda)\to \mG_1(\lambda)\otimes_B\mG_2(\lambda)$$
where each of these morphisms are defined as $(\eta_1)_\lambda\otimes_\mB(\eta_2)_\lambda$ for each $\lambda\in\Lambda$.
\item Composition of morphisms: For two pair of composable transformations
$
\begin{CD}
\mF_1^{} @>\eta_1^{}>> \mG_1^{} @>\zeta_1^{}>>\mH_1
\end{CD}
\text{ and }
\begin{CD}
\mF_2^{} @>\eta_2^{}>> \mG_2^{} @>\zeta_2^{}>>\mH_2
\end{CD}
$, we want to show that their composition
$$(\eta_1\otimes'\eta_2)\circ(\zeta_1\otimes'\zeta_2)= (\eta_1\circ\zeta_1)\otimes' (\eta_2\circ \zeta_2)$$ holds in $\mD^{'}$. This equation of natural transformation is basically a collection of equations of morphisms in $\mB$. Each equation is true using the composition of morphisms in $\mB$.
\item Identity morphism:
$$\Id_{\mF\otimes'\mathcal{G}}^{}= \Id_\mathcal{F}\otimes'Id_\mathcal{G}$$
This is true using the monoidal structure of $\mB$.
\end{enumerate}

The associator $a^{'}$, as well as the left and right unitors,  $\lambda^{'}$ and $\rho^{'}$, are all transformations with the unit functor $I^{'}$ in $\mD^{'}$, which on objects is the collection $\{\lambda\mapsto I_{\mB};\quad\forall\lambda\in\Lambda\}$. They will satisfy the triangle and pentagon axioms since they are true in $\mB$.
\end{proof}

\begin{remark}  We observe that  $\mD'$ is symmetric if and only if $\mB$ is symmetric.
\end{remark}
\begin{theorem}\label{thm: mainthm}
There exists a monoidal structure on a category $\mD$ if it has a  base $(\mB, *, \mG)$ such that
\begin{enumerate}
    \item the base category $\mB$ has monoidal structure and
    \item the pasting functor $*$ satisfies both the conditions of theorem \ref{Thm:Tensor_on_ImF}. Explicitly, for any pair of objects $\mH_1$, $\mH_2$ and $\mH_3$, $\mH_4$ in $\mD^{'}$, such that  whenever we have
\begin{equation*}
*(\mH_1)= *(\mH_2)\text{ and }
*(\mH_3)= *(\mH_4)
\end{equation*}
then $$*(\mH_1\otimes \mH_3)= \mF(\mH_2\otimes \mH_4)$$

And, a similar condition on morphisms. That is, for any pair $\eta, \eta^{'}$ and $\zeta,\zeta^{'}$  of morphisms in $\mD^{'}$, such that

\begin{equation*}
*(\eta)= *(\eta')\text{ and }
*(\zeta)= *(\zeta')
\end{equation*}
\text{ then, we must have} $$*(\eta\otimes\zeta)= *(\eta^{'}\otimes \zeta^{'}).$$
\end{enumerate}
Moreover if $\mB$ is symmetric then with induced monoidal structure  $\mD$ will be symmetric.
\end{theorem}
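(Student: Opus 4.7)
The plan is to assemble the theorem from the two tools already built in the paper: the fact that $\mD^{'}=\Pi_\Lambda\mB$ inherits a monoidal structure from $\mB$ (the preceding proposition), and Theorem \ref{Thm:Tensor_on_ImF} which transports a monoidal structure along any functor that satisfies the two coherence hypotheses. Since the pasting functor $*:\mD^{'}\to\mD$ is precisely such a functor by assumption (2), Theorem \ref{Thm:Tensor_on_ImF} produces a monoidal structure $\otimes_*$ on $\Imf(*)$, with tensor $*(\mH_1)\otimes_{*}*(\mH_2):=*(\mH_1\otimes^{'}\mH_2)$, unit $*(I^{'})$, and associator/unitors obtained as the $*$-images of the associator and unitors of $\mD^{'}$.

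The main step is then to identify $\Imf(*)$ with all of $\mD$ so that the induced structure actually lives on $\mD$. This is exactly what the base condition $*\mG\cong\Id_\mD$ delivers: given any object $D\in\mD$, the object $\mG(D)\in\mD^{'}$ satisfies $*\mG(D)\cong D$, so $D$ lies in the essential image of $*$; a parallel argument on morphisms shows every arrow of $\mD$ is in $\Imf(*)$ up to the natural isomorphism $*\mG\cong\Id_\mD$. I would then explicitly define the tensor on $\mD$ by transport, namely
\[
D_1\otimes_\mD D_2 := *\bigl(\mG(D_1)\otimes^{'}\mG(D_2)\bigr),
\]
with unit $*(I^{'})$, and take the associator and unitors to be the image under $*$ of those in $\mD^{'}$, pre- and post-composed with the isomorphism $*\mG\cong\Id_\mD$ where needed. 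Well-definedness of this tensor on objects and morphisms is exactly the content of hypothesis (2), and the pentagon and triangle identities in $\mD$ follow from those in $\mD^{'}$, which in turn follow from those in $\mB$.

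For the symmetric part, I would invoke the remark that $\mD^{'}=\Pi_\Lambda\mB$ is symmetric whenever $\mB$ is, since the braiding is defined componentwise from that of $\mB$. The corollary following Theorem \ref{Thm:Tensor_on_ImF} then guarantees that $\Imf(*)$ inherits a symmetry from $\mD^{'}$, and transport along $*\mG\cong\Id_\mD$ produces a braiding $s_{D_1,D_2}:D_1\otimes_\mD D_2\to D_2\otimes_\mD D_1$ on $\mD$ whose hexagon axioms reduce to those of $\mB$.

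The obstacle I anticipate is not a conceptual one but the careful bookkeeping around the natural isomorphism $*\mG\cong\Id_\mD$: strictly speaking Theorem \ref{Thm:Tensor_on_ImF} hands us a monoidal structure on $\Imf(*)$, and passing from there to a monoidal structure on $\mD$ itself requires transferring along this isomorphism and checking that the resulting associator, unitors (and braiding in the symmetric case) are natural in $\mD$ and still satisfy the coherence diagrams. This is routine but the step where the hypothesis $*\mG\cong\Id_\mD$, rather than equality, must be handled with some care.
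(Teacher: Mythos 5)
Your proposal follows essentially the same route as the paper's own proof: the tensor on $\mD$ is defined as $D_1\otimes D_2 := *\bigl(\mG(D_1)\otimes^{'}\mG(D_2)\bigr)$ using the monoidal structure on $\mD^{'}=\Pi_\Lambda\mB$ from the preceding proposition, and the unit, associator, unitors and (in the symmetric case) braiding are obtained as $*$-images of those of $\mD^{'}$. If anything, you are more careful than the paper, which silently treats $*\mG\cong\Id_\mD$ as an equality and does not address the bookkeeping you flag at the end.
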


\begin{proof}
A monoidal structure on $\mD$ consists of a tensor functor along with an identity object, associativity and a pair of left and right unitors in $\mD$. We now give all these structures in $\mD$ one by one. The tensor functor $\otimes_\mG^{}$ on $\mD$ is defined as the composition of functors $*\op(\mG\times\mG)$. Explicitly, on any pair of objects $X$ and $Y$, and any pair of morphisms $f:X_1\to Y_1$ and $g:X_2\to Y_2$ in $\mD$, the tensor is defined as:
$$X\og Y =*\Big(\mG(X)\op\mG(Y)\Big), \text{ and}$$
$$f\og g =*\Big(\mG(f)\op\mG(g)\Big)$$
where $\op$ is the tensor on $\mD^{'}$. Since the tensor $\og$ is dependent on the tensor of $\mB$ and hence on the tensor $\op$ of $\mD^{'}$, the associativity $(a\g)$, the unit object (I$_\mG$), the left unitor $(\lambda\mG)$ and the right unitor $(\rho\g)$ are defined in the obvious way as $*(a^{'})$, *(I$^{'}$), $*(\lambda^{'})$ and $*(\rho^{'})$ respectively. These morphisms are isomorphisms as isomorphisms are preserved under the action of functor $*$, and hence they will satisfy the pentagon and triangle axioms.

If $\mB$ is symmetric then $\mD'$ is symmetric, as we have a functor $* \mD'\to \mD$,  therefore symmetric structure of $\mD'$ is carried to $\mD$ by the functor.
\end{proof}

In particular, 
suppose $\mB$ is a subcategory of a category $\mD$ such that we have the same setup as in above theorem. That is, $\mB$ has a monoidal structure $\otimes$ along with two functors $*$ and $\mG$ as defined above, then $(\mB,\otimes)$ will be a monoidal subcategory of $(\mD,\og)$.

\begin{definition}
Let $(\mB,*,\mG)$ be a base for a category $\mD$ such that $\mB$ has a monoidal structure.
Then the monoidal structure on $\mD$ coming from the base satisfying the two conditions of above theorem is denoted by $\otimes_\mG^{}$, and we call this monoidal structure on $\mD$ as induced tensor on $\mD$ from the base $\mB$. In this case, we call base as  monoidal base.
\end{definition}

\begin{example}
Let $\mB$ be the category of  finite dimensional vector spaces over $\mathbb R$ with usual tensor product $\otimes_\mathbb R$ of vector spaces (-ie- $\otimes_\mB=\otimes_\mathbb R$).  By example 4.3, taking $\mD=\mB$ we can think of $\mB$ as a base of itself with $\Lambda=\{1\}$, and the functors $\mG$ and $*$ as the identity functor. We see that $\otimes_\mG= \otimes_\mB$.
\end{example}
\begin{example} (continuation to examples \ref{ex:loopspace_image}, \ref{tensor on compact})
In example \ref{tensor on compact}, we have shown that the category $\mathcal{A}$ of one dimensional compact oriented manifold without boundary together with indexing category $\Lambda=\{1,2,\cdots\}$  serves as a base for the category $\mD$ of oriented 2-dimensional cobordism classes between objects of $\mathcal{A}$ (without holes).  Now, in particular let us choose the indexing category $\Lambda$ to be of two objects only (that is, $\mD^{'}=\mathcal{A}\times\mathcal{A}).$

In example \ref{ex:loopspace_image}, we have discussed a monoidal structure on $\mathcal{A}$ and also shown that the functor $\mF:\mathcal{A}\times \mathcal{A}\to \mD$ satisfies the two conditions asked in the theorem (4.8) above. Think of $\mF$ playing the role of the required $*$ functor for the monoidal base. Thus by the above theorem, there is an induced tensor from the monoidal base $\mathcal{A}$ onto $\mD$. Explicitly, for any two objects $(W_1,X_1,Y_1)$ and $(W_2,X_2,Y_2)$ in $\mD$ we have
$$(W_1,X_1,Y_1)\otimes_\mF^{} (W_2,X_2,Y_2)=(W,X_1\sqcup X_2,Y_1\sqcup Y_2) $$
where $W$ is a cobordism between $X_1\sqcup X_2$ and $Y_1\sqcup Y_2$ without any holes; For any two
morphisms, say, $\Big((W_1,X_1,Y_1),(W_2,X_2,Y_2)\Big)$ and $\Big((W_3,X_3,Y_3),(W_4,X_4,Y_4)\Big)$ in $\mD$ we have
$$\Big((W_1,X_1,Y_1),(W_2,X_2,Y_2)\Big)\otimes_\mF^{}\Big((W_3,X_3,Y_3),(W_4,X_4,Y_4)\Big)=$$
$$\Big((W_1\sqcup W_3,X_1\sqcup X_3,Y_1\sqcup Y_3), (W_2\sqcup W_4,X_2\sqcup X_4,Y_2\sqcup Y_4\Big).$$
Note that the unit object under the induced tensor will be the oriented 2-dimensional class of a sphere (cobordism between empty set to itself without holes!)
\end{example}

\begin{proposition}  $\mB$ is dualizable if and only if $\mD'$ is dualizable. \end{proposition}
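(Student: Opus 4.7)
The statement should be read pointwise: a functor $\mF \in \mD'$ is dualizable (in the monoidal category $\mD' = \Pi_\Lambda \mB$) if and only if each object $\mF(\lambda) \in \mB$ is dualizable. Since the tensor $\otimes'$, the unit $I'$, the associator $a'$ and the unitors $\lambda', \rho'$ on $\mD'$ are all defined componentwise, and morphisms in $\mD'$ are natural transformations whose components are morphisms in $\mB$, the proof is essentially a componentwise reduction. The plan is to transport the duality data between levels using the fact that $\Lambda$ has only identity morphisms.

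For the forward direction, suppose each $\mF(\lambda)$ has a left dual $\mF(\lambda)^*$ with evaluation $ev_\lambda : \mF(\lambda)^* \otimes \mF(\lambda) \to I_\mB$ and coevaluation $coev_\lambda : I_\mB \to \mF(\lambda) \otimes \mF(\lambda)^*$ in $\mB$. Define $\mF^* \in \mD'$ by $\mF^*(\lambda) = \mF(\lambda)^*$ on objects and $\mF^*(\Id_\lambda) = \Id_{\mF(\lambda)^*}$ on morphisms; this is a valid object of $\mD'$ precisely because $\Lambda$ has only identity morphisms. Assemble $ev := \{ev_\lambda\}_{\lambda \in \Lambda}$ and $coev := \{coev_\lambda\}_{\lambda \in \Lambda}$ into natural transformations $\mF^* \otimes' \mF \to I'$ and $I' \to \mF \otimes' \mF^*$; naturality is trivially satisfied since the only morphisms in $\Lambda$ are identities. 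The two triangle composites defining dualizability in $\mD'$ are natural transformations, hence equal to the identity if and only if they are equal to the identity at each $\lambda$, and at each index they reduce exactly to the triangle composites for $\mF(\lambda)$ in $\mB$, which are identities by assumption.

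For the reverse direction, given any $X \in Ob(\mB)$, fix some $\lambda_0 \in \Lambda$ and construct a functor $\mF_X \in \mD'$ by $\mF_X(\lambda_0) = X$ and $\mF_X(\lambda) = I_\mB$ for $\lambda \neq \lambda_0$, with $\mF_X(\Id_\lambda) = \Id_{\mF_X(\lambda)}$. By hypothesis $\mF_X$ has a dual $\mF_X^*$ in $\mD'$ with some $ev'$ and $coev'$; evaluating at $\lambda_0$ produces morphisms $ev'_{\lambda_0}: \mF_X^*(\lambda_0) \otimes X \to I_\mB$ and $coev'_{\lambda_0}: I_\mB \to X \otimes \mF_X^*(\lambda_0)$ in $\mB$ satisfying the triangle identities (read off from the component at $\lambda_0$ of the corresponding identities in $\mD'$), so $X$ is dualizable in $\mB$ with dual $\mF_X^*(\lambda_0)$.

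The only real obstacle is bookkeeping: one must verify that the triangle diagram in $\mD'$, which involves $a'$, $\lambda'$, $\rho'$ and $\otimes'$, genuinely decomposes at each $\lambda$ into the triangle diagram in $\mB$ with $a$, $\lambda_\mB$, $\rho_\mB$ and $\otimes_\mB$. This is purely formal from the componentwise definition of all the monoidal structure on $\mD'$ established in the preceding proposition, so there is no substantive difficulty; the content of the proposition is entirely that duality commutes with the (index-)pointwise product.
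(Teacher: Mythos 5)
Your proposal is correct, and your forward direction is essentially the paper's argument verbatim: define $\mF^*$ componentwise, assemble $ev$ and $coev$ from the componentwise evaluation and coevaluation in $\mB$, and observe that the triangle composites in $\mD'$ are identities because they are identities at each $\lambda$. Where you genuinely diverge is the converse. The paper fixes $\lambda$ and considers the evaluation functor $EV_\lambda:\mD'\to\mB$, $\mF\mapsto\mF(\lambda)$, notes that it is a strong lax symmetric monoidal functor with $\Imf(EV_\lambda)=\mB$, and then invokes Proposition \ref{ponto_1} (the Ponto--Shulman result that such functors preserve dualizable objects) to conclude. You instead argue by hand: for each $X\in Ob(\mB)$ you build the ``delta'' object $\mF_X$ that is $X$ at $\lambda_0$ and $\mI_\mB$ elsewhere, take its dual in $\mD'$, and read off the duality data for $X$ from the $\lambda_0$-component. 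The two arguments have the same mathematical content --- indeed the paper's appeal to $\Imf(EV_\lambda)=\mB$ silently uses exactly the surjectivity that your construction of $\mF_X$ makes explicit --- but yours is self-contained and avoids the external citation, while the paper's is shorter given the cited machinery and packages the componentwise bookkeeping into the statement that $EV_\lambda$ is strong monoidal. One cosmetic remark: the paper's notion of dualizable asks for left and right duals that agree, and its proof adds a sentence noting the right-dual case; in the symmetric setting at hand this is automatic, so your treatment of only one triangle pair is not a real gap, but you may wish to add the same one-line remark.
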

\begin{proof}Firstly, let us assume that $\mB$ is dualizable. That means every object has a dual in $\mB$. Let $\mF: \Lambda\to \mB$ be an object in $\mD'$.  We define a functor $\mF^*: \Lambda\to \mB$ such that
\begin{enumerate}
\item For any $\lambda\in \Lambda$, we have $\mF^*(\lambda):= \mF(\lambda)^*$, here $\mF(\lambda)^*$ is the dual of $\mF(\lambda)$ in $\mB$.
\item $\mF(\Id_\lambda)= \Id_{\mF(\lambda)^*}$
\end{enumerate}
Clearly $\mF^* \in Ob(\mD')$.  We claim that $\mF^*$ is the dual of $\mF$ in the category $\mD'$ with monoidal structure as in proposition 4.4.

We have natrual transformations $ev: \mF\otimes'\mF^*\to \mI_\mD'$, defined by
$$ev_\lambda: \mF(\lambda)\otimes_\mB\mF(\lambda)^*\to \mI_\mB.$$ Here $ev_\lambda$ is the evaluation map in the monoidal category $\mB$ for $\mF(\lambda)$ and $\mF(\lambda)^*$.

Similiary, we have coevaluation map in the category $\mD'$, given by natural transformation, $coev: \mI_\mD\to \mF\otimes'\mF^*$,  it is defined by coevaluation map $coev_\lambda$ in the monoidal category $\mB$ for $\mF(\lambda)$ and $\mF^*(\lambda)$.
$$coev_\lambda:\mI_B\to \mF(\lambda)\otimes_B\mF^*(\lambda).$$

As for each $\lambda$, $\mF(\lambda)$ is dual of $\mF^*(\lambda)$,  we have

$$\begin{CD}
\mF(\lambda)@>{l^{-1}_{\mF(\lambda)}}>>
 \mI_\mB\otimes_\mB \mF(\lambda) @>coev_\lambda\otimes_\mB \Id^{}_\mF(\lambda)>> (\mF(\lambda)\otimes_\mB \mF^*(\lambda))\otimes_\mB \mF(\lambda)@>a^{}_{\mF(\lambda),\mF^*(\lambda),\mF(\lambda)}>>
 \end{CD}
 $$
 $$\begin{CD}
 \mF(\lambda)\otimes_\mB (\mF^*(\lambda)\otimes_\mB \mF(\lambda))@>\Id^{}_\mF(\lambda)\otimes_\mB ev_\lambda>>  \mF(\lambda)\otimes_\mB \mI_\mB@>r^{}_\mF(\lambda)>>  \mF(\lambda)
\end{CD}
$$
here $r$ and $l$ are right and left unitors in $\mB$ and
$$
\begin{CD}
\mF^*(\lambda)@>{r^{-1}_{\mF^*(\lambda)}}>>
 \mI_\mB\otimes_\mB \mF^*(\lambda) @>\Id^{}_{\mF^*(\lambda)}\otimes_\mB coev_\lambda>> \mF^*(\lambda)\otimes_\mB (\mF(\lambda)\otimes \mF^*(\lambda))@>a^{-1}_{\mF^*(\lambda),\mF(\lambda),\mF^*(\lambda)}>>
 \end{CD}$$

$$\begin{CD}(\mF^*(\lambda)\otimes_\mB \mF(\lambda))\otimes_\mB \mF^*(\lambda)@>ev_\lambda\otimes_\mB \Id^{}_{\mF^*(\lambda)}>>  \mI_\mB\otimes \mF^*(\lambda)@>l^{}_{\mF^*(\lambda)}>>  \mF^*(\lambda)
\end{CD}
$$

are both identity,  since $\mF(\lambda)$ has left dual $\mF^*(\lambda)$.  Similiary we can see that $\mF(\lambda)$ has same right dual.   This proves that  we have $\mF^*$ is the dual of $\mF$ in $(\mD',\otimes')$.

Conversely, for a fix $\lambda\in \Lambda$, we define a functor $EV_\lambda: \mD'\to \mB$, such that for each object $\mF\in \mD'$, it associates to $\mF(\lambda)$ in  $\mB$ and For each morphism $\eta: \mF\to \mG$, it associates to $\eta_\lambda:\mF(\lambda)\to \mG(\lambda)$.  $EV_\lambda$ is a functor such that $\mB= Im(EV_\lambda)$, and by the definition of monoidal structur on $\mD'$, we see that
$$EV_\lambda(\mF)\otimes_B EV_\lambda(\mG)=  EV_\lambda(\mF\otimes'\mG).$$
Therefore $EV_\lambda$ is  strong lax symmetric monoidal functor.  As a result every object in $\mB$ is dualizable, if $\mD'$ is dualiable.

\end{proof}

\begin{proposition}
$\mD$ is dualizable if $\mD'$ is dualizable.
\end{proposition}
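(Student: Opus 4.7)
The plan is to push dualizability from $\mD^{'}$ forward to $\mD$ along the pasting functor $*$, using that $*$ is a strong lax symmetric monoidal functor together with the near-invertibility relation $*\mG\cong\Id_\mD$. The two corollaries following Theorem \ref{Thm:Tensor_on_ImF} show that any functor satisfying the two conditions of that theorem is strong lax symmetric monoidal onto its image when the image is equipped with the induced monoidal structure; applied to $*:\mD^{'}\to\mD$ (with $\mD$ carrying the tensor $\otimes_\mG$ of Theorem \ref{thm: mainthm}), this makes $*$ strong lax symmetric monoidal with a structure isomorphism $c_{\mH_1,\mH_2}: *(\mH_1)\og *(\mH_2)\to *(\mH_1\op\mH_2)$ and unit isomorphism $i:\mI_\mD\to *(\mI_{\mD^{'}})$.

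Fix an object $X\in\mD$. Since $*\mG\cong\Id_\mD$, we have $X\cong *(\mG(X))$, and by the hypothesis that $\mD^{'}$ is dualizable, $\mG(X)$ admits a dual $\mG(X)^{*}$ in $\mD^{'}$ with evaluation $ev$ and coevaluation $coev$. I take the candidate dual of $X$ in $\mD$ to be
\begin{equation*}
X^{*}:=*\bigl(\mG(X)^{*}\bigr),
\end{equation*}
with candidate evaluation and coevaluation obtained from $*(ev)$ and $*(coev)$ by composing with the structure isomorphisms $c$ and $i$ of $*$. Because $*$ is strong lax symmetric monoidal, the hypothesis $c:\mF(M)\otimes\mF(M^{*})\to\mF(M\otimes M^{*})$ is an isomorphism in Proposition \ref{ponto_1} is satisfied automatically. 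Invoking Proposition \ref{ponto_1} with $\mF=*$ and $M=\mG(X)$ yields that $*(\mG(X))$ is dualizable in $\mD$ with dual $*(\mG(X)^{*})=X^{*}$. Finally, transferring along the natural isomorphism $*\mG\cong\Id_\mD$ shows that $X$ itself is dualizable.

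The main obstacle is purely bookkeeping: verifying the triangle identities in $\mD$ for the candidate maps requires threading the triangle identities for $ev,coev$ in $\mD^{'}$ through the structure isomorphisms $c,i$ and through the associators and unitors of $\mD$. However, since by Theorem \ref{thm: mainthm} the associator and unitors in $\mD$ are defined as the images under $*$ of those in $\mD^{'}$, all the relevant coherence diagrams commute essentially tautologically; this is exactly the verification encapsulated by Proposition \ref{ponto_1}, so no further work beyond citing it is needed.
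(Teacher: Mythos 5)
Your proposal is correct and follows essentially the same route as the paper: both arguments rest on $*$ being a strong lax symmetric monoidal functor onto $\mD$ (so Proposition \ref{ponto_1} applies with $\mF=*$) together with the fact that every object of $\mD$ is, up to isomorphism, of the form $*(\mH)$. The paper's own proof is just a one-line compression of this; your version merely makes explicit the use of $*\mG\cong\Id_\mD$ to realize each object in the image.
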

\begin{proof}
Since the functor $*: \mD'\to \mD$ is a strong lax symmetric monoidal functor and $\mD= *(\mD')$, it proves the result.
\end{proof}

\section{example: Loop space}
\subsection{Loop space and lifting of maps}
Let $V$ be a finite dimensional vector space over $\mathbb R$. By loop space we mean set of all maps from $S^1$ to $V$, we denote this set by $LV$.  With usual function addition and scalar multiplication this is a vector space over $\mathbb R$.

If $T: V_1\to V_2$ is a linear map from vector space $V_1$ to $V_2$, we define a map
$$\widetilde{T}: LV_1\to LV_2\quad \text{as:}$$

\begin{equation}
\widetilde{T}(\gamma)(t):= T(\gamma(t)).
\end{equation}

We call $\widetilde{T}$ is the lifting of $T$.  $\widetilde{T}$ is a linear map from $LV_1\to LV_2$.
\subsection{Categories $\mB$ and $\mathcal{D}$}
Category $\mB$ is the category of finite dimensional vector spaces over $\mathbb R$ and linear maps as morphisms. Category $\mathcal{D}$  is defined by the following:
\begin{enumerate}
\item Objects are $LV$ for all finite dimensional vector spaces $V$ over $\mathbb R$.
\item Morphisms $Hom (LV_1, LV_2):= L Hom (V_1, V_2)$.  For $f: LV_1\to LV_2$ there is a unique linear  map  $T_f: V_1\to V_2$ such that

    \begin{equation}
    f(\gamma)(t):= \widetilde{T_f}(\gamma(t)).
    \end{equation}

\end{enumerate}
For $f\in Hom(LV_1, LV_2)$ and $g\in Hom(LV_2, LV_3)$ there exits $T_f$ and $T_g$ and we have
$$\widetilde{T_g}\circ \widetilde{T_f}= \widetilde{T_{g\circ f}}$$
This means that $g\circ f\in Hom(LV_1,LV_3)$.

\subsection{Monoidal structure on  $\mathcal{D}$ from the theorem above}
From the construction of categories, we have a natural choice of functor $L:\mathcal{B}\to \mathcal{D}$ defined as:

$$ V\to LV;\;\;f\to \widetilde{f}$$
where $\widetilde{f}$ is given by equation (6). Let $\mG: \mathcal{D}\to \mathcal{B}$ be a functor which associates each object $LV$ to $ \{\gamma\in LV:\gamma \text{ is a constant loop}\}$ and any morphism $f: LV_1\to LV_2$ to $T_f: V_1\to V_2$ given by equation (7).

Taking $\Lambda=\{1\}$ so that the product category of $\mB$ over $\Lambda$ is $\mB$ itself, then the data $(\mathcal{B}, \mG, L)$ becomes a base for $\mD$.  Here $L$ works as $*$ functor in the definition.

\begin{proposition}
There is a monoidal structure on $\mD$ induced from its base $\mB$.
\end{proposition}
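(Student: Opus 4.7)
The plan is to invoke Theorem \ref{thm: mainthm} applied to the data $(\mB, L, \mG)$ with the indexing category $\Lambda=\{1\}$. Since the product category $\mD^{'}=\Pi_\Lambda\mB$ of $\mB$ over a singleton is (isomorphic to) $\mB$ itself, the pasting functor $*$ of Theorem \ref{thm: mainthm} is played by $L$, and the required monoidal structure on the base is the standard tensor product $\otimes_\mathbb{R}$ on finite dimensional real vector spaces.

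First I would verify that $(\mB,L,\mG)$ really is a base for $\mD$, that is, $L\circ\mG\cong\Id_\mD$. On objects, $\mG(LV)$ is the subspace of constant loops, canonically isomorphic to $V$ via evaluation at any point, so $L(\mG(LV))\cong LV$. On morphisms, $\mG(f)=T_f$ is by (7) the unique linear map with $f(\gamma)(t)=T_f(\gamma(t))$, hence $L(T_f)=\widetilde{T_f}=f$.

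Next I would check the two conditions of Theorem \ref{Thm:Tensor_on_ImF} for the functor $L$. For the object condition: if $LV_1=LV_2$ as objects of $\mD$, then restricting to constant loops forces $V_1=V_2$, so whenever $L(V_1)=L(V_2)$ and $L(V_3)=L(V_4)$ we get $V_1\otimes_\mathbb{R} V_3=V_2\otimes_\mathbb{R} V_4$ and hence $L(V_1\otimes V_3)=L(V_2\otimes V_4)$. For the morphism condition: if $L(f)=L(f')$, i.e. $\widetilde{f}=\widetilde{f'}$, then evaluating on constant loops yields $f=f'$; similarly $g=g'$, so $f\otimes g=f'\otimes g'$ and $L(f\otimes g)=L(f'\otimes g')$.

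With both hypotheses of Theorem \ref{thm: mainthm} in hand, I conclude that $\mD$ carries an induced monoidal structure $\otimes_\mG$, determined explicitly by $LV_1\otimes_\mG LV_2=L(V_1\otimes_\mathbb{R} V_2)$ and the analogous formula on morphisms, with unit object $L(\mathbb{R})$. The only substantive step is verifying the essential injectivity of $L$ on both objects and morphisms, and this is immediate from the constant-loop identification $V\hookrightarrow LV$; once this is in place everything else reduces to a direct appeal to Theorem \ref{thm: mainthm}.
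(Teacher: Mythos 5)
Your proposal is correct and follows essentially the same route as the paper: realize $(\mB, L, \mG)$ as a base with $\Lambda=\{1\}$, verify the two compatibility conditions for the pasting functor $L$, and invoke the main theorem to obtain $LV_1\otimes_\mG LV_2 = L(V_1\otimes_{\mathbb R} V_2)$. In fact you supply more detail than the paper does, since the paper merely asserts the two conditions while you justify them via the injectivity of $L$ on objects and morphisms coming from the constant-loop embedding.
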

\begin{proof}
First we see that for vector spaces $V_1, V_2$ such that  $LV_1=  LV_2$ and $W_1, W_2$ such that $LW_1= LW_2$, we have $$L(V_1\otimes_\mB W_1)= L(V_2\otimes_\mB W_2).$$
Also for any morphisms $\eta, \eta', \zeta, \zeta'$, if $L(\eta)= L(\eta')$ and $L(\zeta)= L(\zeta')$, we have $$L(\eta\otimes_\mB\zeta)= L(\eta'\otimes_\mB\zeta')$$

Therefore $(\mB, \mG, L)$ satisfies all the condition of theorem 4.6.  Thus, we have monoidal structure on $\mathcal{D}$ defined as
$$LV_1\otimes_\mG LV_2:= L(V_1\otimes V_2)$$
$$\eta\otimes_\mG \zeta:= \widetilde{T_\eta\otimes_\mB T_\zeta}$$
\end{proof}
With this induced monoidal structure on $LV$, it is obvious to see that the functor $L$ and $\mG$  are strong lax symmetric monoidal functors. And as a result of above remark, every object in the category $\mathcal{D}$ of loop space is dualizable.  Explicit calculation of the above observations is given in the following subsection.
\subsection{Explicit calculation for above results in the case of loop space.}
$$L\mathbb R\otimes LV= L(\mathbb R\otimes V)\simeq L(V).$$ Therefore $L\mathbb R$ is the identity element in this monoidal category $\mD$. If
$V$ is of $n$ dimensional vector space over $\mathbb R$ and if $V^*$ is the dual of $V$ in original sense,  then we have a map
$$ev: V\otimes V^*\to \mathbb R$$ defined by
$$ev\left(\sum_{i=1,j=1}^na_{ij}e_i\otimes e_j^*\right)= \sum_{i=1}^n a_{ii}$$
We also have the coevaluation map.
$$coev: \mathbb R\to V\otimes V^*$$
$$coev(1)= \sum_{i=1}^n e_i\otimes e_i^*$$

Let $\widetilde{ev}$ and $\widetilde{coev}$ be the lifting of $ev$ and $coev$ respectively, then we have
$$\widetilde{ev}: LV\otimes LV^*(= L(V\times V^*)\to L\mathbb R$$
by $$\left(t\to \gamma(t)=\sum_{i=1,j=1}^na_{ij}(t)e_i\otimes e_j^*\right)\to \left(t\to \sum_{i=1}^n a_{ii}(t)\right)$$

and
$$\widetilde{coev}: L\mathbb R\to LV\otimes LV^*:= L(V\times V^*)$$
by
$$\left(t\to \gamma(t)\right)\to \left(t\to \gamma(t)\sum e_i\otimes e_i^*\right)$$

Now we will write  left and right unitor.

$$\widetilde{\lambda_V}: L\mathbb R\otimes LV:=L(\mathbb R\otimes V)\to LV$$
$$\widetilde{\lambda_V}\left(t\to \sum a_i(t)e_i\right)= \left(t\to \sum a_i(t)e_i\right)$$

Also we have
$$\widetilde{\lambda_V}^{-1}: LV\to L\mathbb R\otimes LV$$
by$$\widetilde{\lambda_V}^{-1}\left(t\to \sum a_i(t)e_i\right)= \left(t\to \sum 1\otimes a_i(t)e_i\right)$$

The map $\widetilde{\lambda_V}$ and inverse are the lift  of the map $\lambda_V: \mathbb R\otimes V\to V$ and $\lambda_V^{-1}: V\to \mathbb R\otimes V$.

Similar way we have maps
$$\rho_V: V\otimes \mathbb R\to V$$
$$\rho_V(\sum a_i e_i\otimes v)= \sum b a_i e_i$$
If we lift this map to the loop space, we have corresponding map.
$$\widetilde{\rho}_V:= LV\otimes L\mathbb R:= L(V\otimes \mathbb R)\to LV$$ given by
$$ \widetilde{\rho_V}\left(t\to \gamma(t)=\sum a_i(t) e_i \otimes b(t)\right):= \left(t\to \sum b(t)a_i(t)e_i\right)$$
Also if we lift $\rho_V^{-1}$ we have
$$\widetilde{\rho_V}^{-1}\left(t\to \sum a_i(t) e_i\right):= \left(t\to \sum a_i(t) e_i \otimes 1\right)$$

Now we will calculate the map $$\widetilde{coev}\otimes \Id_{LV}: L\mathbb R\otimes LV \to L(V\otimes V^*)\otimes LV$$

Let $\gamma\in L\mathbb R\otimes LV:= L(\mathbb R\otimes V)$, we can write
$$\gamma(t)= \gamma_1(t)\otimes \gamma_2(t)= b(t)\otimes \sum a_i(t) e_i$$

In general, for each $\gamma\in L\mathbb R\otimes LV$, there exists $\gamma_1$ and $\gamma_2$ such that $\gamma(t)= \gamma_1(t) \otimes \gamma_2(t)$. Obviously, $\gamma_1\in L\mathbb R$ and $\gamma_2\in LV$.

We denote $\gamma_1\otimes \gamma_2:= \left(t\to \gamma_1(t)\otimes \gamma_2(t)\right)$

With this notation we have a symmetric structure $\widetilde{s}$ in $\mD$ given as
$$\widetilde{s}: LV_1\otimes LV_2\to LV_2\otimes LV_1$$
$$\widetilde{s}(\gamma_1\otimes \gamma_2)= \gamma_2\otimes \gamma_1$$

\subsubsection{Braiding and Trace } A braiding for a monoidal category consists of  a family of isomorphisms $$C^{}_{X,Y}:X\otimes Y\to Y\otimes X, \text{ for }X,Y \text{ in } \mC $$natural in $X$ and $Y$, such that the two Hexadonal identities are satisfied. A monoidal category together with a braiding is called a \textit{braided monoidal category}.

Let $X$ be a dualizable object in a braided monoidal category $\mC$, and that $f:X\to X$ is an endomorphism of $X$. The $\textit{trace} tr_X(f) \in End(\mI)$ is the composite
$$
\begin{CD}
\mI@>{coev^{}_X}>>
 X\otimes X^* @>C^{}_{X,X^*}>> X^*\otimes X@>\Id^{}_{X^*}\otimes f>>  X^*\otimes X@> ev^{}_X>>  \mI
\end{CD}
$$
We know that category of finite dimensional vector spaces over $\mathbb R$ with usual tensor product is a braided monoidal category. From the braiding (family of isomorphism) if we lift all the isomorphism to the loop space level (by tilde map discussed in section 5.1), we can easily see that $\mD$ with induced monoidal structure is braided. So we have the following.
\begin{proposition}
Category $(\mB, \otimes_\mB)$ and $(\mD, \otimes_G)$ are braided monoidal category.
\end{proposition}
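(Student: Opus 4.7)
The proof splits naturally into two parts, one for each category. For $\mB$ the statement is essentially classical; the content lies in $\mD$, where the braiding will be produced by lifting.

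First, I would observe that the category of finite dimensional $\mathbb{R}$-vector spaces under the usual tensor product is in fact a symmetric (and hence braided) monoidal category. The braiding is the standard swap
\[
C_{V_1,V_2}: V_1\otimes_\mB V_2\to V_2\otimes_\mB V_1,\qquad v\otimes w\mapsto w\otimes v,
\]
whose naturality in both variables is immediate on pure tensors and extends by bilinearity, and the two hexagon identities reduce to straightforward equalities among triple swaps and associators. This settles $(\mB,\otimes_\mB)$.

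For $(\mD,\otimes_\mG)$, the plan is to transport the braiding of $\mB$ through the lifting functor $L:\mB\to\mD$. Recall from the preceding subsection that $L$ is strong lax symmetric monoidal (every object is its own value under lifting, and $L(V_1)\otimes_\mG L(V_2)=L(V_1\otimes_\mB V_2)$ by the very definition of $\otimes_\mG$). I would therefore define
\[
\widetilde{C}_{LV_1,LV_2}:=\widetilde{C_{V_1,V_2}}: LV_1\otimes_\mG LV_2\longrightarrow LV_2\otimes_\mG LV_1,
\]
which under the identification $L(V_1\otimes_\mB V_2)=LV_1\otimes_\mG LV_2$ is nothing other than the map $\widetilde{s}$ already written down at the end of the explicit computations, $\gamma_1\otimes\gamma_2\mapsto \gamma_2\otimes\gamma_1$. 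Since each $C_{V_1,V_2}$ is an isomorphism in $\mB$ and $L$ is functorial, each $\widetilde{C}_{LV_1,LV_2}$ is an isomorphism in $\mD$.

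Next I would verify naturality: given any $f:LV_1\to LU_1$ and $g:LV_2\to LU_2$ in $\mD$, the corresponding $T_f,T_g$ fit into a commutative square in $\mB$ expressing naturality of $C$, and applying $L$ together with the identities $L(T_f)=f$ and $L(T_g)=g$ (from the definition of the morphisms in $\mD$) turns this into the desired commutative square in $\mD$. Finally, the two hexagon axioms in $\mD$ are obtained by applying $L$ to the two hexagon diagrams in $\mB$; because $L$ is strong lax symmetric monoidal, it carries the associators $a$ to $a_\mG$ and preserves tensor products of morphisms, so the whole diagram transports verbatim. The step I expect to require the most care is this last one: one has to be scrupulous about matching the six arrows on each hexagon with their $L$-images, in particular keeping track of the identifications $L(V_1\otimes_\mB V_2)=LV_1\otimes_\mG LV_2$ so that an equality of diagrams in $\mB$ really does translate into an equality of the correct diagrams in $\mD$. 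Once this bookkeeping is done, both hexagons hold and $(\mD,\otimes_\mG,\widetilde{C})$ is a braided monoidal category.
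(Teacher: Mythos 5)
Your proposal follows exactly the paper's route: the paper simply notes that finite dimensional real vector spaces form a braided (indeed symmetric) monoidal category and that lifting the braiding isomorphisms to the loop space level via the tilde map makes $(\mD,\otimes_\mG)$ braided, which is precisely your construction of $\widetilde{C}_{LV_1,LV_2}$. You merely spell out the naturality and hexagon verifications that the paper leaves implicit, so the argument is correct and essentially identical.
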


Lets see the map
$$L\mathbb R\xrightarrow{\widetilde{coev}}LV\otimes LV^*\xrightarrow{Id\otimes Id} LV\otimes LV^*\xrightarrow{\widetilde{s}}LV^*\otimes LV\xrightarrow{\widetilde{ev}} L\mathbb R$$

$$\left(t\to \gamma(t)\sum e_i\otimes e_i^*\right)= \left(t\to \sum \gamma(t) e_i\otimes e_i^*\right)$$
$$\to \left(t\to \gamma(t).n\right)= n\gamma$$

This proves that $tr_{LV}(\Id)= n = \text{ dimensional of vector space}$.  For each object in the category we get a unique number that is the dimensional of vector space. This is because of the way we define the monoidal structure on $\mD$.


\begin{thebibliography}{99}
\bibitem{cegarra} A.M. Cegarra, A.R. Gar\`{z}on, J.A. Ortega, \textit{Graded extensions of monoidal
categories}, J. Algebra 241 (2001), no. 2, 620--657.
\bibitem{kassel}Christian Kassel, \textit{Quantum Groups}, Graduate texts in mathematics; vol I. 155, DOI 10.1007/978--1--4612--0783--2.
\bibitem{kock}Joachim Kock, \textit{Frobenius Algebras and 2D Topological Quantum Field Theories}, Cambridge University Press 2003, ISBN 978--O--521--54031--O.
\bibitem{ponto} Ponto Kate, Shulman Michael, \textit{Traces in symmetric monoidal categories}, Expo. Math. 32 (2014), no. 3, 248--273.
\bibitem{brian} Brian Day, Craig Pastro, \textit{Note on Frobenius monoidal functors}, New York J. Math. 14 (2008) 733–742. MR2465800 (2009k:18001).
\bibitem{maclane}Mac Lane, Saunders (1998), \textit{Categories for the Working Mathematician} (2nd ed.), Berlin, New York: Springer-Verlag, ISBN 978--0--387--98403--2, p. 123, section V.7.
\bibitem{Wilfried}Imrich, Wilfried, Klavzar, Sandi, \textit{Product Graphs: Structure and Recognition},  Wiley (2000), ISBN 0--471--37039--8.
\end{thebibliography}
\end{document}